\documentclass[12pt,a4paper]{article}

\usepackage{latexsym}
\usepackage{amsmath}
\usepackage{amssymb}
\usepackage{arydshln}

\usepackage{cite}
\usepackage{stmaryrd}
\usepackage{enumerate}

\usepackage{hyperref}

\usepackage{color}
\usepackage{lineno}
\usepackage{graphicx}
\usepackage{ae}
\usepackage{amsmath}
\usepackage{amssymb}
\usepackage{latexsym}
\usepackage{url}
\usepackage{epsfig}
\usepackage{mathrsfs}
\usepackage{amsfonts}
\usepackage{amsthm}

\usepackage{float}
\usepackage{subfig}
\usepackage{tikz}
\usetikzlibrary{positioning}

\newtheorem{theorem}{Theorem}[section]

\newtheorem{lemma}[theorem]{Lemma}

\theoremstyle{definition}

\numberwithin{equation}{section} 
\allowdisplaybreaks    

\def\qed{\hfill$\Box$\vspace{12pt}}

\long\def\delete#1{}

\usetikzlibrary{decorations.markings}

\tikzstyle{vertex}=[circle, draw, inner sep=0pt, minimum size=6pt]
\tikzstyle{directed}=[postaction={decorate,
	decoration={markings,mark=at position 0.3 with {\arrow{stealth}}}
}]

\usepackage{xcolor}
\usepackage[normalem]{ulem}

\begin{document}
\title {Maximum index of unbalanced signed graphs without a positive odd cycle}

\author{Li Ma$^{a,b}$,~Linfeng Xie$^{a,c}$,~Xiaogang Liu$^{a,c,}$\thanks{Supported by the National Natural Science Foundation of China (No. 12371358).}~$^,$\thanks{ Corresponding author. Email addresses: malixj@mail.nwpu.edu.cn, xielinfeng@mail.nwpu.edu.cn, xiaogliu@nwpu.edu.cn}~
	\\[2mm]
	{\small $^a$School of Mathematics and Statistics,}\\[-0.8ex]
	{\small Northwestern Polytechnical University, Xi'an, Shaanxi 710072, P.R.~China}\\
	{\small $^b$College of Mathematics and Physics,}\\[-0.8ex]
	{\small Xinjiang Agricultural University, Urumqi, Xinjiang 830052, P.R.~China}\\
	{\small $^c$Xi'an-Budapest Joint Research Center for Combinatorics,}\\[-0.8ex]
	{\small Northwestern Polytechnical University, Xi'an, Shaanxi 710129, P.R. China}\\
}
\date{}

\openup 0.5\jot
\maketitle

\begin{abstract}
	Let $\mathcal{C}_{r}^+$ be the set of all balanced signed graphs with underlying graphs $C_r$. A signed graph is called $\mathcal{C}_{r}^+$-free if it contains no subgraph isomorphic to any member of $\mathcal{C}_{r}^+$. In this paper, we characterize the extremal graphs that achieve the maximum index among all $\mathcal{C}_{2k+1}^+$-free unbalanced signed graphs of order $n$ when $k\ge 2$ and $n\ge 187k$.
	
	\smallskip
	
	\emph{Keywords:} Signed graph; Spectral Tur{\'a}n theorem; Index; Extremal graph
	
	\emph{Mathematics Subject Classification (2020):} 05C50, 05C35.
\end{abstract}

\section{Introduction}
A \emph{signed graph} $\Gamma=(G,\sigma)$ consists of a simple graph $G=(V(G),E(G))$ together with a signature $\sigma \colon E(G) \to \{+1,-1\}$ on its edge set. The unsigned graph $G$ is called the \emph{underlying graph} of $\Gamma$. In 1946, Heider \cite{Hei-JP-1946} proposed balance theory, which laid the conceptual foundation for the development of signed graphs. Building on Heider's work, Harary \cite{Har-MichM-1953} first formally established signed graph theory, aiming to mathematically formalize balance theory governing group relationships. After nearly three decades, Chaiken \cite{Chaiken-SIAM-1982} and Zaslavsky \cite{Zaslavsky-DAM-1982} independently obtained the matrix-tree theorem for signed graphs. For more information on signed graphs, please refer to \cite{Beck-Zaslavsky-JCTB-2006,Zaslavsky-book-2010,Zaslavsky-EJC-2018,Zaslavsky-JCTB-1987}.

Let $\Gamma$ be a signed graph with the vertex set $V(\Gamma)=\{v_1,v_2,\ldots,v_n\}$. An edge $v_i v_j$ is called a \emph{positive edge} (respectively, \emph{negative edge}) if $\sigma(v_i v_j)=+1$ (respectively, $\sigma(v_i v_j)=-1$). The \emph{adjacency matrix} of $\Gamma$ is defined as $A(\Gamma)=(a_{ij}^{\sigma})$, where $a_{ij}^{\sigma}=\sigma(v_iv_j)$ if $v_i$ and $v_j$ are adjacent, and $a_{ij}^{\sigma}=0$ otherwise. Let
\[
\lambda_1(\Gamma)\ge\lambda_2(\Gamma)\ge\cdots\ge\lambda_n(\Gamma).
\]
denote the eigenvalues of $A(\Gamma)$. The largest eigenvalue $\lambda_1(\Gamma)$ is called the \emph{index} of $\Gamma$, and \[
\rho(\Gamma)=\max\{\lambda_1(\Gamma),-\lambda_n(\Gamma)\}
\]
is called the \emph{spectral radius} of $\Gamma$.
For an unsigned graph $G$, we write $\lambda_1(G)$ for the largest eigenvalue of its adjacency matrix.

Let $\emptyset \ne U\subseteq V(G)$. The operation of changing the signs of all edges between $U$ and $V(G) \setminus U$ is called a \emph{switching}, and is also referred to as \emph{switching $\Gamma$ at a vertex subset $U$}. A signed graph $\Gamma^{\prime}$ is said to be \emph{switching equivalent} to $\Gamma$ if $ \Gamma^{\prime}$ is obtained from $\Gamma$ by a finite sequence of switchings, denoted as $\Gamma\sim \Gamma^{\prime}$; otherwise, we write \(\Gamma\not\sim\Gamma'\). A cycle is called \emph{positive} if the number of its negative edges is even; otherwise, \emph{negative}. A signed graph is called \emph{balanced} if each of its cycles is positive; otherwise, \emph{unbalanced}.

Given a graph $F$, a graph $G$ is said to be \emph{$F$-free} if it contains no subgraph isomorphic to $F$, and the classical Tur\'an problem seeks the maximum number of edges over all $F$-free graphs on $n$ vertices. Let $K_m$ denote the complete graph of order $m$. In 2007, Nikiforov~\cite{Nikiforov} proved that every $K_{r+1}$-free graph $G$ of order $n$ satisfies
\[
\lambda_1(G)\le\lambda_1(T_r(n)),
\]
with equality holding if and only if $G\cong T_r(n)$, where $T_r(n)$ denotes the complete $r$-partite graph on $n$ vertices whose partite sizes differ by at most one. This result is a spectral counterpart of the classical Tur\'an theorem and has motivated extensive research on spectral extremal problems for forbidden subgraphs.

For odd cycles, it is natural to study the corresponding spectral extremal problem under the additional assumption that the graph is non-bipartite. Let $C_m$ denote the cycle of order $m$. In 2021, Guo, Lin and Zhao \cite{Guo-Lin-Zhao-LAA-2021} proved that if $G$ is a non-bipartite $C_5$-free graph of order $n\ge21$, then
\[
\lambda_1(G)\le\lambda_1(B_n),
\]
with equality holding if and only if $G\cong B_n$, where $B_n$ denotes the graph obtained by identifying a vertex of a triangle with a vertex in the smaller part of $K_{\lfloor(n-2)/2\rfloor,\lceil(n-2)/2\rceil}$, and $K_{s,t}$ denotes the \emph{complete bipartite graph} on $s+t$ vertices with vertex bipartition $(X,Y)$ such that $|X|=s$, $|Y|=t$, and all possible edges between $X$ and $Y$ are present. Subsequently, Zhang and Zhao~\cite{Zhang-Zhao-DM-2023} extended this conclusion to $C_{2k+1}$-free non-bipartite graphs for $k\ge3$ and sufficiently large $n$. More recently, Zou, Feng and Li \cite{Zou-Feng-Li-2026} established a general result for non-bipartite graphs without prescribed odd cycles.


Given a family of signed graphs $\mathcal F$, a signed graph is called \emph{$\mathcal F$-free} if it contains no subgraph isomorphic to any member of $\mathcal F$, where the isomorphism preserves edge signs. For $r\ge3$, let $\mathcal C_r^+$ (respectively, $\mathcal C_r^-$) denote the families of all balanced (respectively, unbalanced) signed graphs whose underlying graph is $C_r$. In 2024, Wang, Hou and Li \cite{Wang-Hou-Li-LAA-2024} determined the maximum spectral radius of $\mathcal{C}_{3}^-$-free graphs. In the same year, Wang and Lin \cite{Wang-Lin-DAM-2024} gave the maximum index of $\mathcal{C}_{4}^-$-free graphs. In 2025, Wang, Hou and Huang  \cite{Wang-Hou-Huang-DAM-2025} determined the maximum index of $\mathcal{C}_{2k+1}^-$-free graphs for $3\le k\le \frac{n}{10}-1$. Very recently, Xie, Zhu and Liu~\cite{Xie-Zhu-Liu-AR-2026} determined the maximum index of $\mathcal{C}_{3}^+$-free graphs and characterized the extremal graphs.


A distinction arises between forbidding positive and negative odd cycles when maximizing the index.  Let $-\Gamma$ be the signed graph obtained from $\Gamma$ by reversing the sign of every edge. This operation reverses the sign of every odd cycle. Although this bijection maps signed graphs without negative odd cycles to those without positive odd cycles, the index fails to be preserved, since
\[
\lambda_1(-\Gamma)=-\lambda_n(\Gamma),
\]
which is not equal to $\lambda_1(\Gamma)$ in general. Consequently, extremal results for forbidden negative odd cycles do not directly yield the corresponding maximum-index results for forbidden positive odd cycles.

Let $(G, +)$ denote the signed graph with all positive edges, whose underlying graph is $G$. Let $T_2^*(n)$ denote the signed graph obtained from $\bigl(K_{\lceil n/2\rceil,\lfloor n/2\rfloor},+\bigr)$ by adding a negative edge between two vertices in the part of size $\lceil n/2\rceil$. Clearly, $T_2^*(n)$ contains a negative triangle and hence is unbalanced. Every odd cycle in $T_2^*(n)$ contains its unique negative edge and is therefore negative. Thus, $T_2^*(n)$ contains no positive odd cycle.

In this paper, we determine the maximum index among all unbalanced signed graphs with no positive cycle of a prescribed odd length. Our main result is as follows.

\begin{theorem}\label{C2k+1-free}
	Let $k \ge 2$ and $n \ge 187k$.
	If $\Gamma$ is an unbalanced $\mathcal{C}_{2k+1}^+$-free signed graph of order $n$, then
	\[
	\lambda_1(\Gamma) \le \lambda_1(T_2^*(n)),
	\]
	with equality holding if and only if $\Gamma \sim T_2^*(n)$.
\end{theorem}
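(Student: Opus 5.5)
We argue by contradiction via the standard spectral-stability route. Let $\Gamma$ be an extremal graph: unbalanced, $\mathcal{C}_{2k+1}^+$-free, of order $n$, with $\lambda_1(\Gamma)\ge\lambda_1(T_2^*(n))$. We use switching freely, since it preserves the spectrum, balance, and $\mathcal{C}_{2k+1}^+$-freeness.

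\textbf{Step 1 (lower bound and eigenvector).} First estimate $\lambda_1(T_2^*(n))$ by an explicit test vector. Taking the Perron vector of $K_{\lceil n/2\rceil,\lfloor n/2\rfloor}$ and perturbing it slightly gives
\[
\lambda_1(T_2^*(n))>\sqrt{\lfloor n^2/4\rfloor},
\]
or at least $\lambda_1(T_2^*(n))\ge \sqrt{\lfloor n^2/4\rfloor}-O(1/n)$, which is all we need. Next, among all signatures in the switching class, choose $\Gamma$ so that a unit eigenvector $x$ for $\lambda_1$ has all entries nonnegative on a large set; this is the usual trick that for the index we may assume many edges with $x_ux_v>0$ are positive. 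Then compare with the underlying graph. Since $\lambda_1(\Gamma)\le\lambda_1(G)$, the graph $G$ has index at least about $n/2$.

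\textbf{Step 2 (structure of the underlying graph).} The key observation is that a $\mathcal{C}_{2k+1}^+$-free signed graph has few positive copies of odd cycles. Write $\Gamma^+$ for the subgraph of positive edges after a switching maximizing positive edges, chosen so that $x\ge 0$ after switching. Then
\[
\lambda_1(\Gamma)\le x^TA(\Gamma^+)x.
\]
Hence $\lambda_1(\Gamma^+)\ge\lambda_1(\Gamma)\gtrsim n/2$. Moreover $\Gamma^+$ is an unsigned $C_{2k+1}$-free graph, since every cycle in it is positive. By spectral stability for $C_{2k+1}$-free graphs (Nikiforov's bound $\lambda_1\le\sqrt{\lfloor n^2/4\rfloor}$ for large $n$, with stability), $\Gamma^+$ is close to $K_{n/2,n/2}$ with parts $X$ and $Y$, up to $o(n^2)$ edges. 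Using eigenvector entries together with the standard deletion argument, we show that every vertex has $x_v\ge c$ for some $c>0$ after removing a small exceptional set, and then that the exceptional set is empty. This uses $n\ge 187k$ to run the counting quantitatively.

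\textbf{Step 3 (pinning down the edges).} We now show $\Gamma^+$ is bipartite with parts $X,Y$. A positive edge inside $X$, together with the near-complete bipartite structure, would close a positive $(2k+1)$-cycle through vertices of large common neighbourhood. Negative edges inside parts are allowed, but each such edge $uv$ contributes $-2x_ux_v<0$ to the Rayleigh quotient. A negative edge between $X$ and $Y$ is also costly: it lowers the quotient by $2x_ux_v$ relative to a positive edge. Unbalancedness forces at least one negative cycle, so at least one "defect" must exist.

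We then compare the possible configurations. In the first, a negative edge inside a part with all cross edges positive and complete: this is $T_2^*$, and placing the edge in the larger part is best. In the second, all edges lie across the parts but some are negative; such a cross-negative configuration must avoid positive $C_{2k+1}$ only trivially, since the graph is bipartite and has no odd cycles at all. Unbalancedness then needs a negative even cycle. To rule this out we compare $\lambda_1$ with the index of $K_{\lceil n/2\rceil,\lfloor n/2\rfloor}$ minus an edge, plus corrections, and show it falls below $\lambda_1(T_2^*(n))$ by an explicit eigen-equation computation. We also show that adding more negative edges inside parts, or extra edges, either creates a positive $C_{2k+1}$ (two negative edges in parts linked through the bipartite graph give an even number of negatives on an odd cycle) or decreases the index.

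\textbf{Main obstacle.} The main obstacle is the final comparison: $\lambda_1(T_2^*(n))$ versus the competitors. These are $K_{a,b}$ with one negative cross edge or a negative even cycle, unbalanced bipartite graphs, and variants with unequal part sizes. All of them have index within $O(1/n)$ of $n/2$, so the comparison requires precise characteristic-polynomial or eigen-equation estimates rather than crude bounds. I would first treat these via equitable quotient matrices to get explicit polynomials and compare roots.
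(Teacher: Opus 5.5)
The gap is in Steps 2--3, where you must show that $\Gamma^+$ is bipartite. Spectral stability only places $\Gamma^+$ within $o(n^2)$ edges of $K_{n/2,n/2}$, and this is insensitive to a change of $\lambda_1$ by a constant. Your argument that a positive edge inside $X$ closes a positive $C_{2k+1}$ works only when both endpoints have many neighbours across. The claim that ``the exceptional set is empty'' is asserted with no mechanism.

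That is exactly where the difficulty lies. $B_n$ is non-bipartite and $C_{2k+1}$-free, its triangle uses two vertices of degree $2$ with small eigenvector entries, and yet $\lambda_1(B_n)$ lies below $\lambda_1(T_2^*(n))$ only by a constant. Emptying the exceptional set in the signed setting would need an explicit rewiring that preserves both unbalancedness and $\mathcal{C}_{2k+1}^+$-freeness, and you do not give one. In effect you are re-proving the exact spectral Tur\'an theorem for non-bipartite $C_{2k+1}$-free graphs. That theorem (Lemma~\ref{Zou-Feng-Li-2026}) is precisely where $n\ge 187k$ comes from. The paper just applies it to $\Gamma'_+$: with $\mathbf{x}\ge 0$, if $\Gamma'_+$ is non-bipartite then $\lambda_1(\Gamma')\le \mathbf{x}^TA(\Gamma'_+)\mathbf{x}\le\lambda_1(B_n)<\frac n2-\frac14<\lambda_1(T_2^*(n))$ by Lemma~\ref{Bn}.

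In the bipartite case you are missing the observation that collapses your ``final comparison''. Take $\mathbf{x}\ge 0$ and a bipartition $(U,W)$ of $\Gamma'_+$. Either a negative edge lies inside a part, and then $A(\Gamma')\le A(H_{|U|,|W|})$ entrywise. Or the underlying graph is bipartite with a negative cross edge and both parts have size at least $2$, and then $A(\Gamma')\le A(F_{|U|,|W|})$ entrywise. So $\lambda_1(\Gamma')$ is bounded by $\lambda_1(H_{|U|,|W|})$ or $\lambda_1(F_{|U|,|W|})$, regardless of how many negative edges $\Gamma'$ has. Only these two explicit families need a quotient-matrix computation, and equality forces $\Gamma'=T_2^*(n)$.

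The $-1$ entry must be kept. Deleting the negative cross edge gives $K_{a,b}-e$, whose $\lambda_1^2$ is the larger root of $t^2-(ab-1)t+(a-1)(b-1)$. This already exceeds $\lambda_1^2(T_2^*(n))$: for $n=20$ it is about $98.175$ against $98.166$. So comparing with ``$K_{\lceil n/2\rceil,\lfloor n/2\rfloor}$ minus an edge'' cannot succeed.

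Two smaller errors:
\begin{enumerate}
\item $\lambda_1(T_2^*(n))>\sqrt{\lfloor n^2/4\rfloor}$ is false. Lemma~\ref{Hab} gives the reverse inequality, and your version would contradict the Nikiforov bound you invoke.
\item A cycle whose only edges inside the parts are two negative edges has even length. So your parity argument does not exclude several negative edges inside the parts. They have to be excluded through the index, which the domination argument does automatically.
\end{enumerate}
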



\section{Proof of Theorem \ref{C2k+1-free}}\label{Preliminaries}
In this section, we give the proof of Theorem \ref{C2k+1-free}. Before proceeding, we present some necessary results first.

\begin{lemma}\emph{(See \cite[Lemma~2.5]{Sun-Liu-Lan-LAA-2022})}\label{Sun-Liu-Lan-LAA-2022}
	Let $\Gamma$ be a signed graph of order $n$. Then there exists a signed graph $\Gamma^{\prime}$ switching equivalent to $\Gamma$ for which the largest eigenvalue $\lambda_{1}(\Gamma^{\prime})$ admits a nonnegative eigenvector.
\end{lemma}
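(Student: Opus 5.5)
The plan is to realize switching as a diagonal similarity and then choose the switching set according to the signs of an eigenvector. Let $A=A(\Gamma)$ and let $x=(x_1,\ldots,x_n)^T$ be a real eigenvector of $A$ for $\lambda_1(\Gamma)$. Such an $x$ exists because $A$ is real symmetric.

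\textbf{Step 1: switching as a similarity.} For $U\subseteq V(\Gamma)$, let $D_U$ be the diagonal matrix with $(D_U)_{ii}=-1$ if $v_i\in U$ and $(D_U)_{ii}=1$ otherwise. Then $(D_UAD_U)_{ij}=d_id_ja_{ij}^{\sigma}$. The factor $d_id_j$ equals $-1$ exactly when one of $v_i,v_j$ lies in $U$ and the other does not. Hence $D_UAD_U$ is the adjacency matrix of $\Gamma$ switched at $U$. Since $D_U^{-1}=D_U$, this matrix is orthogonally similar to $A$. So switching preserves the spectrum, and in particular $\lambda_1$.

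\textbf{Step 2: choice of $U$.} Put $U=\{v_i : x_i<0\}$.
\begin{itemize}
\item If $U=\emptyset$, then $x\ge 0$ already, and we take $\Gamma'=\Gamma$.
\item Otherwise switch at $U$ to get $\Gamma'$ with $A(\Gamma')=D_UAD_U$. (If $U=V(\Gamma)$, switching changes nothing, since every edge has both ends in $U$; also $-x\ge 0$ is then an eigenvector of $A$, so we may take $\Gamma'=\Gamma$.)
\end{itemize}

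\textbf{Step 3: the eigenvector.} By construction $D_Ux=|x|:=(|x_1|,\ldots,|x_n|)^T$. Then
\[
A(\Gamma')\,|x| = D_UAD_U D_U x = D_U A x=\lambda_1(\Gamma) D_U x=\lambda_1(\Gamma)\,|x| .
\]
Also $|x|\neq 0$ and $\lambda_1(\Gamma')=\lambda_1(\Gamma)$ by Step 1. Hence $|x|$ is a nonnegative eigenvector for the largest eigenvalue of $\Gamma'$, which proves the lemma.

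There is no real obstacle here. The only points needing care are checking that conjugation by $D_U$ is exactly the combinatorial switching operation, and the degenerate choices of $U$, which are handled in Step 2.
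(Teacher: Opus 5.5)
Your proof is correct. Conjugating by the diagonal signature matrix $D_U$ with $U=\{v_i : x_i<0\}$ realizes switching at $U$ as an orthogonal similarity and sends $x$ to $|x|$; this is the standard argument behind the cited result, which the paper quotes from Sun--Liu--Lan without giving its own proof.
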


The following two lemmas show that both the balance property and the eigenvalues of signed graphs are invariant under switching equivalence.
\begin{lemma}\emph{(See \cite[Proposition~3.2]{Zaslavsky-DAM-1982})}\label{Zaslavsky-DAM-1982}
	Two signed graphs on the same underlying graph are switching equivalent if and only if they have the same list of balanced cycles.
\end{lemma}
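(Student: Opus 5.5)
The plan is to prove the two directions separately. The forward direction follows from a parity observation about cuts. For the converse, I reduce to the case where one signature is all-positive and then build an explicit switching set from a spanning forest.

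\emph{Switching preserves cycle signs.} Let $\Gamma'$ be obtained from $\Gamma=(G,\sigma)$ by switching at $U$. Every cycle $C$ of $G$ crosses the cut $[U,V(G)\setminus U]$ an even number of times, since a closed walk leaves $U$ as often as it enters it. Switching flips the sign of exactly those edges of $C$ lying in the cut. So the product of the signs along $C$ is multiplied by $(-1)^{\text{even}}=1$, and the sign of $C$ is unchanged. By induction on the number of switchings, switching equivalent signed graphs have the same list of balanced cycles.

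\emph{The converse: reduction to the all-positive signature.} Suppose $(G,\sigma_1)$ and $(G,\sigma_2)$ have the same balanced cycles. Define the product signature $\tau=\sigma_1\sigma_2$. For every cycle $C$,
\[
\prod_{e\in C}\tau(e)=\prod_{e\in C}\sigma_1(e)\cdot\prod_{e\in C}\sigma_2(e)=1,
\]
because the two products always agree. Hence $(G,\tau)$ is balanced. It suffices to find $U$ such that switching $(G,\tau)$ at $U$ gives the all-positive signature. Indeed, switching multiplies each edge sign by the fixed factor $-1$ on cut edges and $+1$ elsewhere, regardless of the starting signature. So the same switching sends $\sigma_1$ to $\sigma_1\tau=\sigma_2$.

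\emph{Construction of $U$ (the main step).} In each component of $G$, fix a spanning tree $T$ and a root $r$. Let $U$ be the set of vertices $v$ for which the product of the $\tau$-signs along the $T$-path from $r$ to $v$ equals $-1$.
\begin{itemize}
\item \emph{Tree edges.} For a tree edge $uv$, with $v$ the child of $u$, the path products at $u$ and $v$ differ exactly by $\tau(uv)$. So $uv$ is a cut edge precisely when $\tau(uv)=-1$, and after switching every tree edge is positive.
\item \emph{Non-tree edges.} Each non-tree edge $e$ closes a fundamental cycle with $T$. This cycle is positive, both before switching (balance) and after (by the first part). All its other edges are now positive tree edges, so $e$ must be positive as well.
\end{itemize}
Hence the switched signature is all-positive, which completes the converse.

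One technicality remains: the paper's definition of switching requires $U\neq\emptyset$. If the construction yields $U=\emptyset$, then $\tau$ is already all-positive, so $\sigma_1=\sigma_2$ and the two signed graphs are trivially equivalent. Equivalently, one may switch at $V(G)$, which changes no edge. The only real care needed in the whole argument is making precise the claim that switching acts by multiplying signatures by a fixed cut indicator.
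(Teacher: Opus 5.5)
The paper gives no proof of this lemma. It is quoted from Zaslavsky and used only as a black box, so there is no in-paper argument to compare against.

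Your proof is correct and complete, and it is the standard argument:
\begin{itemize}
\item The forward direction uses the fact that every cycle meets every cut in an even number of edges.
\item The converse observes that the product signature $\sigma_1\sigma_2$ is balanced. It then switches it to the all-positive signature using spanning-tree path products, which is Harary's balance theorem.
\item Your treatment of the case $U=\emptyset$ is also fine. Alternatively, since no root lies in $U$, you could switch at the nonempty set $V(G)\setminus U$, which determines the same cut and avoids the technicality altogether.
\end{itemize}
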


\begin{lemma}\emph{(See \cite[Lemma~2.1]{Hou-Tang-Wang-AMC-2019})}
	\label{Hou-Tang-Wang-AMC-2019}
	Let $\Gamma_{1}=(G,\sigma_{1})$ and $\Gamma_{2}=(G,\sigma_{2})$ be two signed graphs on the same underlying graph $G$. The following are equivalent:
	
	$\mathrm{(1)}$~$\Gamma_{1}$ and $\Gamma_{2}$ are switching equivalent.
	
	$\mathrm{(2)}$~$A(\Gamma_{1})$ and $A(\Gamma_{2})$ are signature similar.
\end{lemma}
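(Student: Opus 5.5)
The plan is to prove both directions by matching switchings with diagonal $\pm1$ matrices. Recall that $A(\Gamma_1)$ and $A(\Gamma_2)$ are \emph{signature similar} if $A(\Gamma_2)=D A(\Gamma_1) D^{-1}$ for some diagonal matrix $D=\mathrm{diag}(d_1,\ldots,d_n)$ with $d_i\in\{+1,-1\}$. Note that $D^{-1}=D$.

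\textbf{Key computation.} For a subset $U\subseteq V(G)$, let $D_U$ be the diagonal matrix with $d_i=-1$ if $v_i\in U$ and $d_i=+1$ otherwise. Then
\[
(D_U A(\Gamma) D_U)_{ij}=d_i d_j\, a^{\sigma}_{ij}.
\]
The factor $d_id_j$ equals $-1$ exactly when one of $v_i,v_j$ lies in $U$ and the other does not. So $D_UA(\Gamma)D_U$ is the adjacency matrix of $\Gamma$ switched at $U$. Zero entries stay zero, so the underlying graph is unchanged.

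\textbf{(1)$\Rightarrow$(2).} Suppose $\Gamma_2$ arises from $\Gamma_1$ by switchings at $U_1,\ldots,U_m$. Iterating the key computation gives $A(\Gamma_2)=DA(\Gamma_1)D$ with $D=D_{U_m}\cdots D_{U_1}$. This product is again a diagonal $\pm1$ matrix, so the two matrices are signature similar.

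\textbf{(2)$\Rightarrow$(1).} Suppose $A(\Gamma_2)=DA(\Gamma_1)D$ with $D$ diagonal $\pm1$. Set $U=\{v_i: d_i=-1\}$, so that $D=D_U$.
\begin{itemize}
\item If $U$ is nonempty, the key computation shows that $\Gamma_2$ is $\Gamma_1$ switched at $U$.
\item If $U=\emptyset$, then $D=I$ and $\Gamma_1=\Gamma_2$. Here switching at $U=V(G)$ changes no edge, since $V(G)\setminus U=\emptyset$, and it is a valid switching because $U\neq\emptyset$. So $\Gamma_1\sim\Gamma_2$ trivially.
\end{itemize}
Both signed graphs have the same underlying graph $G$, so identifying $\Gamma_i$ with $A(\Gamma_i)$ is legitimate.

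The entrywise identity is the only real content. The one subtlety is the nonemptiness convention for $U$, which the trivial case above handles.
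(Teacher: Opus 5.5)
The paper does not prove this lemma itself; it cites it from Hou, Tang and Wang, and your argument is the standard one: conjugation by the signature matrix $D_U$ realizes switching at $U$ entrywise through $d_id_j a^{\sigma}_{ij}$, a sequence of switchings composes to a single signature matrix, and conversely every signature matrix equals some $D_U$. Your proof is correct as written, including the case $D=I$ (handled by switching at $U=V(G)$, or by the empty sequence of switchings), so it gives a complete self-contained justification of a result the paper only quotes.
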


Recall that $B_n$ is the unsigned graph obtained by identifying a vertex of a triangle with a vertex in the smaller part of $K_{\left\lfloor \frac{n-2}{2} \right\rfloor,\left\lceil \frac{n-2}{2} \right\rceil}$.
\begin{lemma}\emph{(See \cite[Theorem~2.1(b)]{Zou-Feng-Li-2026})}\label{Zou-Feng-Li-2026}
	Let $k \ge 2$ and $n \ge 187k$. If $G$ is an $n$-vertex non-bipartite $C_{2k+1}$-free graph, then
	$$\lambda_1(G) \le \lambda_1(B_n),
	$$
	with equality holding if and only if $G \cong B_n$.
\end{lemma}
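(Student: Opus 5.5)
We argue by taking an extremal graph and pinning down its structure. Let $G$ be a non-bipartite $C_{2k+1}$-free graph on $n\ge 187k$ vertices maximizing $\lambda_1$, and put $\lambda=\lambda_1(G)$ with Perron vector $\mathbf{x}$, normalized so that $\max_v x_v=1$. Since $B_n$ is itself non-bipartite and $C_{2k+1}$-free, we have $\lambda\ge\lambda_1(B_n)>\lambda_1(K_{\lfloor (n-2)/2\rfloor,\lceil (n-2)/2\rceil})\ge (n-2)/2$. We may also assume $G$ is connected, since otherwise we could join a component to the rest and increase $\lambda$ without creating a $C_{2k+1}$.

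\emph{Step 1 (stability).} We would combine $\lambda^2\le$ (sum over $v$ of walks of length $2$ from $v$) with the known edge bound $e(G)\le n^2/4$ for $C_{2k+1}$-free graphs with $n$ large. From this we get $e(G)\ge n^2/4-O(n)$. The Erd\H{o}s--Simonovits stability theorem for odd cycles (or Nikiforov's spectral stability) then gives a partition $V=X\cup Y$ with $e(X)+e(Y)=O(n^{3/2})$ and $|X|,|Y|=n/2+o(n)$. We would refine this by the eigenvector equation $\lambda x_v=\sum_{u\sim v}x_u$. The set $W$ of vertices with fewer than roughly $(1/2-\varepsilon)n$ neighbours across the partition is small, and every vertex of $(X\cup Y)\setminus W$ has $x_v$ bounded below by a constant.

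\emph{Step 2 (no internal edges among typical vertices).} Suppose $u,v\in X\setminus W$ are adjacent. Their cross-neighbourhoods in $Y$ are huge, and $G[X\setminus W, Y\setminus W]$ is nearly complete bipartite. So we can find a $u$--$v$ path of length $2k$ alternating between $Y$ and $X$, which together with $uv$ closes a $C_{2k+1}$; this is a contradiction. Similar embedding arguments show that each $w\in W$ has neighbours essentially on one side only, and that any odd cycle must pass through $W$. The condition $n\ge187k$ is what supplies enough room to embed paths of every needed length $\le 2k$ while avoiding $O(k)$ forbidden vertices.

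\emph{Step 3 (exact structure).} This is the main obstacle. Since $G$ is non-bipartite, it has a shortest odd cycle $C$, and by Step 2, $C$ meets the near-bipartite core in a very restricted way. Counting the degree loss that $C$ forces, together with the eigenvector equations, should show that $C$ is a triangle and that $C$ shares at most one vertex with the core. It should also show that the two other triangle vertices have degree $2$: any further edges from them to the core would produce a $C_{2k+1}$ through paths of suitable parity. We then compare $\lambda$ with $\lambda_1(B_n)$ via the Rayleigh quotient, moving edges to the attachment vertex and rebalancing the parts. Concretely, we would write $\mathbf{x}^{T}(A(G')-A(G))\mathbf{x}>0$ for the modified graph $G'$, which shows that the triangle must be attached at a vertex of the smaller part and that the parts have sizes $\lfloor (n-2)/2\rfloor,\lceil (n-2)/2\rceil$. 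Hence $G\cong B_n$, which gives both the bound and the equality case. The delicate point is this final comparison, where the eigenvalue differences are only $O(1/n)$, so precise estimates of the entries $x_v$ on the triangle are needed.
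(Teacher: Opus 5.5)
There is no proof in the paper to compare with. This lemma is quoted from Zou--Feng--Li (Theorem 2.1(b)) and used only as a black box, so your sketch has to stand on its own. As written, it does not prove the statement.

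\textbf{Step 1: the stability input cannot reach $n\ge 187k$.} The statement is uniform in $k$: it covers every pair with $n\ge 187k$, so $k$ may be as large as $n/187$. The Erd\H{o}s--Simonovits and Nikiforov stability theorems you invoke concern a fixed forbidden graph. They hold only for $n\ge n_0(k,\varepsilon)$, with no linear control on $n_0$. In Nikiforov's spectral version, the complete tripartite subgraph it produces has parts of size about $c\log n$. These parts must be of order $k$ to contain $C_{2k+1}$, which forces $n$ to be exponential in $k$. So this route gives at best a ``sufficiently large $n$'' result of Zhang--Zhao type. Your remark that $n\ge 187k$ leaves room for path embeddings only concerns Step 2, not the stability input that feeds it.

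\textbf{Step 1: the edge count is also off.} Summing $2$-walks over all $v$ gives $\sum_v d(v)^2$, which yields no useful lower bound on $e(G)$. The usable inequality is
$\lambda^2\le\max_v\sum_{u\sim v}d(u)\le e(G)+\max_v e(G[N(v)])$.
Since $G[N(v)]$ contains no path on $2k$ vertices, Erd\H{o}s--Gallai gives $e(G[N(v)])\le (k-1)d(v)$, hence $e(G)\ge\lambda^2-(k-1)n$. The deficiency is therefore of order $kn$, not $O(n)$, and it is a constant fraction of $n^2$ when $k$ is near $n/187$. Nothing you cite gives $e(X)+e(Y)=O(n^{3/2})$. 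A minor point: $\lambda_1(K_{\lfloor (n-2)/2\rfloor,\lceil (n-2)/2\rceil})=\sqrt{\lfloor (n-2)^2/4\rfloor}$, which is less than $(n-2)/2$ for odd $n$.

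\textbf{Step 3 is announced, not proved.} This step carries the whole content of the theorem. No parity or forbidden-cycle argument can show that the shortest odd cycle is a triangle. For example, join two vertices on the same side of $K_{s,t}$ by a new path of length $2k+1$. The result is non-bipartite and $C_{2k+1}$-free, and every odd cycle in it has length at least $2k+3$. Pendant copies of $C_5,\dots,C_{2k-1}$ are also allowed when $k\ge 3$. Outside vertices may keep many one-sided edges to the core, so ``counting the degree loss'' is not straightforward either. Ruling out all these configurations needs a genuine quantitative comparison with $\lambda_1(B_n)$.

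The same applies to your final step: comparing $B_n$ with the triangle attached in the larger part, or with shifted part sizes. There the eigenvalue differences are $O(1/n)$ or much smaller. You note this is delicate but do not carry it out.

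What you have is a plausible outline of the standard large-$n$ strategy. The steps that carry the weight are missing, and the tools you chose cannot reach the stated range $n\ge 187k$.
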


For integers $a \ge 2$, $b \ge 1$, let $H_{a,b}$ be the signed graph obtained from $(K_{a,b},+)$ by adding a negative edge inside the part of size $a$.

\begin{lemma}\label{Hab}
	Let $n\ge 4 $ be an integer, and let $a\ge 2$, $b\ge 1$ be two integers satisfying $a+b=n$. Then $\lambda_1(H_{a,b})$ is simple and possesses a positive eigenvector;
	\[
	\lambda_1(H_{a,b}) \le \lambda_1(T_2^*(n)),
	\]
	with equality holding if and only if $a = \left\lceil \frac{n}{2} \right\rceil$ and $b = \left\lfloor \frac{n}{2} \right\rfloor$. Moreover,
	\[
	\frac{2\left(\left\lfloor\frac{n^2}{4} \right\rfloor-1\right)}{n} \le \lambda_1(T_2^*(n))< \sqrt{\left\lfloor \frac{n^2}{4} \right\rfloor},\qquad \lambda_1^2(T_2^*(n)) > \left\lfloor \frac{n^2}{4} \right\rfloor-2.
	\]
\end{lemma}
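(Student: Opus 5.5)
The plan is to compute the characteristic polynomial of $H_{a,b}$ through its equitable quotient. Let $X$ be the part of size $a$, containing the negative edge $uv$, and let $Y$ be the part of size $b$. Put $X'=X\setminus\{u,v\}$.

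\emph{Simplicity and the positive eigenvector.} Although $H_{a,b}$ has a negative edge, it is not balanced, so no switching makes it all-positive. Instead I would use $M=A(H_{a,b})+I$ (or $+cI$ for suitable $c$). Its only negative entries are at positions $(u,v)$ and $(v,u)$. For a Perron-type argument, I would show directly that the largest eigenvalue has a positive eigenvector by restricting to vectors constant on the cells $\{u,v\}$, $X'$, $Y$. With the variables $x$ on $\{u,v\}$, $z$ on $X'$ and $y$ on $Y$, the quotient equations are
\[
\lambda x=-x+by,\qquad \lambda z=by,\qquad \lambda y=2x+(a-2)z .
\]
This gives $(\lambda+1)\lambda^2=2b\lambda+(a-2)b(\lambda+1)$, that is,
\[
f(\lambda)=\lambda^3+\lambda^2-ab\lambda-(a-2)b=0 .
\]
The remaining eigenvalues come from vectors orthogonal to the cells. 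Vectors antisymmetric on $\{u,v\}$ give $\lambda=1$. Vectors summing to zero on $X'$ or on $Y$ give $\lambda=0$. So the spectrum is $\{1,0,\dots,0\}$ together with the three roots of $f$. Since $f(\sqrt{ab})=-(a-2)b+ab-(a-2)b\cdot 0$, I would compute carefully: $f(\sqrt{ab})=ab\sqrt{ab}+ab-ab\sqrt{ab}-(a-2)b=2b>0$, and $f(1)<0$ once $ab\ge 3$. Hence the largest root $\lambda_1$ lies in $(1,\sqrt{ab})$ and exceeds every other eigenvalue, so it is simple. Its eigenvector satisfies $y>0$, $z=by/\lambda>0$ and $x=by/(\lambda+1)>0$, so it is positive. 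The small cases with $n\ge 4$ (for example $b=1$) I would check separately.

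\emph{Comparison.} Write $\lambda=\lambda_1(H_{a,b})$. From $f$, $\lambda^2(\lambda+1)=b(a\lambda+a-2)$, so
\[
\lambda^2=ab-\frac{2b}{\lambda+1}.
\]
I would treat the function $g(a,b)$ defined implicitly by this relation, subject to $a+b=n$. Increasing $ab$ increases $\lambda$, and the correction term $2b/(\lambda+1)$ favours smaller $b$. Among splits with equal product, namely $a=\lceil n/2\rceil$ versus $a=\lfloor n/2\rfloor$ when $n$ is odd, the one with larger $a$ wins. For unequal products, I would compare $f_{a,b}$ and $f_{a',b'}$ at the larger candidate root. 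The difference $(ab-a'b')\lambda+\dots$ has a definite sign because $ab-a'b'\ge 1$ outweighs the change in the $b$-terms, which is at most $2$ divided by something of order $\lambda$. This inequality is the main technical step, and I expect it to be the main obstacle, especially for unbalanced parts where $a<b$ is allowed.

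\emph{Bounds for $T_2^*(n)$.} Set $m=\lfloor n^2/4\rfloor=ab$ with $b=\lfloor n/2\rfloor$. The relation gives $\lambda^2=m-2b/(\lambda+1)<m$, which is the strict upper bound. Since $2b/(\lambda+1)<2$ because $\lambda+1>b$ (check via $f(b-1)<0$), we get $\lambda^2>m-2$. For the lower bound, I would use the Rayleigh quotient with the all-ones vector: $\mathbf{1}^TA\mathbf{1}/n=2(m-1)/n$, since there are $m$ positive edges and one negative edge.
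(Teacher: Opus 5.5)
Your setup matches the paper's: the same three-cell equitable quotient, the cubic $f_{a,b}(t)=t^3+t^2-abt-(a-2)b$, the identity $\lambda^2=ab-2b/(\lambda+1)$, and the all-ones Rayleigh quotient. Write $\theta=\lambda_1(T_2^*(n))$, $a_0=\lceil n/2\rceil$, $b_0=\lfloor n/2\rfloor$. Your alternative way of getting $\theta+1>b_0$, via $f_{a_0,b_0}(b_0-1)<0$, is fine; this value is $-b_0(2b_0-3)$ for $n$ even and $-3b_0(b_0-1)$ for $n$ odd.

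The genuine gap is the comparison $\lambda_1(H_{a,b})\le\theta$ with its equality case. This is the heart of the lemma, and you leave it as a heuristic that is also inaccurate. At a common point $t$,
\[
f_{a,b}(t)-f_{a_0,b_0}(t)=(a_0b_0-ab)(t+1)+2(b-b_0).
\]
So with $b=b_0+d$ the change in the $b$-terms is $2d$. It is not bounded by $2$, let alone by $2/\lambda$; it grows linearly in $|d|$. It is harmless only because the product deficit is quadratic ($a_0b_0-ab=d^2$ for $n$ even, $d^2-d$ for $n$ odd) and is multiplied by $t+1$. Crude bounds cannot substitute for this. For $n$ even and $d=\pm1$ one has $ab=a_0b_0-1$, and $\lambda_1(H_{a,b})^2<ab$ together with $\theta^2>a_0b_0-2$ decides nothing. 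For $n$ odd and $d=1$ the two products are equal.

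The missing step is to evaluate $f_{a,b}$ at $\theta$, where $f_{a_0,b_0}(\theta)=0$:
\[
f_{a,b}(\theta)=d\bigl(d(\theta+1)+2\bigr)\quad(n\text{ even}),\qquad f_{a,b}(\theta)=d\bigl((d-1)(\theta+1)+2\bigr)\quad(n\text{ odd}).
\]
Each is strictly positive for every integer $d\neq0$, using only $\theta>1$. For $d\ge1$ the inner factor is positive. For $d\le-1$ it is at most $1-\theta$ (resp. $-2\theta$), hence negative.

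To turn $f_{a,b}(\theta)>0$ into $\lambda_1(H_{a,b})<\theta$, you also need that $f_{a,b}$ has exactly one positive root. This follows from Descartes' rule, since its coefficient sequence has one sign change. Then $f_{a,b}<0$ on $(0,\lambda_1(H_{a,b}))$; without this, note that a cubic is also positive between its two smaller roots.

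The same fact closes a smaller hole in your first step. $f(1)<0<f(\sqrt{ab})$ only yields some root in $(1,\sqrt{ab})$. By itself it shows neither that this root is the largest root of $f$ nor that it is simple. You need both before concluding that the $\lambda_1$-eigenspace is the one-dimensional lifted one, so that your positive vector is the eigenvector. The paper gets this from Perron--Frobenius applied to $Q_{a,b}+I$ together with the uniqueness of the positive root.
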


\begin{proof}
	Let $U = \{u_1,u_2,\dots,u_a\}$ and $W = \{w_1,w_2,\dots,w_b\}$ be the two partite sets of $H_{a,b}$, where $u_1u_2$ is the unique negative edge.
	
	If $a \ge 3$, using the vertex partition
	\[
	V_1 = \{u_1,u_2\},\quad V_2 = U \setminus V_1,\quad V_3 = W,
	\]
	we obtain the equitable quotient matrix
	\[
	Q_{a,b} = \begin{pmatrix}
		-1 & 0 & b \\
		0 & 0 & b \\
		2 & a-2 & 0
	\end{pmatrix}.
	\]
	By direct calculation, its characteristic polynomial is
	\[
	f_{a,b}(t) = \det(tI - Q_{a,b}) = t^3 + t^2 - abt - b(a-2).
	\]
	Thus the eigenvalues of $H_{a,b}$ consist of the three roots of $f_{a,b}(t)$,  together with $1$ (multiplicity $1$) and $0$ (multiplicity $n-4$). Indeed, eigenvalues $1$ and $0$ correspond to vectors orthogonal to the constant subspace. More precisely, the vector supported on $\{u_1,u_2\}$ with coordinates $1,-1$ has eigenvalue $1$, and the vectors supported on $V_2$ or $V_3$ whose coordinates sum to zero have eigenvalue $0$.
	
	Note that $Q_{a,b}+I$ is nonnegative and irreducible. By the Perron--Frobenius theorem, the largest eigenvalue of $Q_{a,b}$ is simple and has a positive eigenvector. Furthermore, $f_{a,b}$ has exactly one positive root and
	\[
	f_{a,b}(1) = 2 - 2b(a-1) < 0.
	\]
	Hence, the positive root of $f_{a,b}$ is greater than $1$ and is the largest eigenvalue of $A(H_{a,b})$. Lifting its positive eigenvector from the quotient matrix gives a strictly positive eigenvector of $A(H_{a,b})$.

	If $a=2$, then the equitable quotient matrix is
	\[
	Q_{2,b} = \begin{pmatrix}
		-1 & b \\
		2 & 0
	\end{pmatrix},
	\]
	whose characteristic polynomial is $g_{2,b}(t)=t^2 + t - 2b $. Thus the eigenvalues of $H_{2,b}$ consist of the two roots of $g_{2,b}(t)$,  together with $1$ (multiplicity $1$) and $0$ (multiplicity $b-1$). Similar to the case $a\ge 3$, one can easily verify that the positive root of $g_{2,b}(t)$ is greater than $1$ and is the largest eigenvalue of $A(H_{2,b})$, which is simple and has a positive eigenvector.

	Put $\lambda = \lambda_1(H_{a,b})$, and let $\mathbf{x}$ be a positive eigenvector corresponding to $\lambda$.  By the above equitable vertex partition, set $x_{u_1}=x_{u_2}=z$, $x_{u_i}=u$ for $3\le i\le a$, and $x_{w_j}=v$ for $1\le j\le b$. Then the eigenvalue equation $A(H_{a,b})\mathbf{x}=\lambda\mathbf{x}$ implies that
	\[
	(\lambda + 1)z = bv,\quad \lambda u = bv,\quad \lambda v = 2z + (a-2)u,
	\]
	where all terms involving $u$ are omitted in the case $a=2$. Hence,
	\begin{equation}\label{a-b}
		1 = b\left(\frac{2}{\lambda(\lambda+1)} + \frac{a-2}{\lambda^2}\right),\qquad \lambda^2 = ab - \frac{2b}{\lambda+1}.
	\end{equation}

	Set $a_0 = \lceil n/2\rceil$, $b_0 = \lfloor n/2\rfloor$ and $\theta = \lambda_1(T_2^*(n)) = \lambda_1(H_{a_0,b_0})$. Clearly, $f_{a_0,b_0}(\theta)=0$ and $\theta>1$. Let $b = b_0 + d$, where $d$ is an integer. Then $a=a_0-d$. By direct calculation,
	\[
	\begin{aligned}
		f_{a,b}(\theta) - f_{a_0,b_0}(\theta) &= (a_0 b_0 - ab)(\theta + 1) + 2(b - b_0) \\
		&=
		\begin{cases}
			d\big(d(\theta + 1) + 2\big), & n\text{ is even},\\[0.2cm]
			d\big((d - 1)(\theta + 1) + 2\big), & n\text{ is odd}.
		\end{cases}
	\end{aligned}
	\]
	
	\noindent\emph{Case~1.}~$n$ is even. If $d \ge 1$, then $d(d(\theta + 1) + 2) > 0$. If $d \le -1$, then
	\[
	d(\theta + 1) + 2 \le 1 - \theta < 0,
	\]
	and then $d(d(\theta + 1) + 2) > 0$. Thus $f_{a,b}(\theta) \ge 0$ with equality  holding if and only if $d = 0$.
	
	\noindent\emph{Case~2.}~$n$ is odd. If $d \ge 1$, then $(d-1)(\theta + 1) + 2 \ge 2$, and then $d\big((d - 1)(\theta + 1) + 2\big)>0$. If $d \le -1$, then
	\[
	(d-1)(\theta + 1) + 2 \le -2\theta < 0,
	\]
	and then $d\big((d - 1)(\theta + 1) + 2\big)>0$. Thus $f_{a,b}(\theta) \ge 0$ with equality  holding if and only if $d = 0$.
	
	From the discussion above, we have $$f_{a,b}(\theta) - f_{a_0,b_0}(\theta)\ge 0.$$
	Then
	\[
	\lambda_1(H_{a,b}) \le \theta,
	\]
	with equality holding if and only if $a = a_0$ and $b = b_0$.

	Let $\mathbf{1}$ denote the all-ones column vector of size $n$, and $\mathbf{1}^T$ its transpose. By the Rayleigh-Ritz principle, we obtain
	\begin{equation}\label{a-b-c-121}
		\lambda_1(T_2^*(n)) \ge \frac{\mathbf{1}^T A(T_2^*(n))\mathbf{1}}{\mathbf{1}^T \mathbf{1}} = \frac{2(a_0 b_0-1)}{n} > b_0 - 1.
	\end{equation}
	The last inequality in \eqref{a-b-c-121} follows from
	\[
	2(a_0 b_0-1) - n(b_0 - 1) =
	\begin{cases}
		n-2, & n\text{~is~even},\\[0.2cm]
		3b_0 - 1, & n\text{~is~odd},
	\end{cases}
	\]
	which is positive for $n \ge 4$. By \eqref{a-b} and \eqref{a-b-c-121}, we obtain
	\[
	a_0 b_0 - 2 < a_0 b_0 - \frac{2b_0}{\lambda_1(T_2^*(n)) + 1} = \lambda_1^2(T_2^*(n))< a_0 b_0.
	\]
	
	
	This completes the proof.\qed
\end{proof}


\begin{lemma}\label{Bn}
	Let $n \ge 21$ be an integer. Then
	\[
	\lambda_1(B_n) < \frac{n}{2} - \frac14 < \lambda_1(T_2^*(n)).
	\]
\end{lemma}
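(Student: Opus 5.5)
The lower bound $\frac{n}{2}-\frac14<\lambda_1(T_2^*(n))$ comes straight from Lemma~\ref{Hab}. That lemma gives $\lambda_1^2(T_2^*(n))>\lfloor n^2/4\rfloor-2$, so it suffices to check $\lfloor n^2/4\rfloor-2\ge (\frac n2-\frac14)^2=\frac{n^2}{4}-\frac n4+\frac1{16}$. For even $n$ this is $\frac n4\ge 2+\frac1{16}$. For odd $n$ it is $\frac n4\ge \frac94+\frac1{16}$. Both hold for $n\ge 21$.

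For the upper bound on $\lambda_1(B_n)$ I will use an equitable partition. Write $s=\lfloor (n-2)/2\rfloor$ and $t=\lceil (n-2)/2\rceil$, so $s+t=n-2$. Let $w$ be the identified vertex, lying in the part $X$ of size $s$, and let $y_1,y_2$ be the two other triangle vertices. The partition $\{w\}$, $X\setminus\{w\}$, $Y$, $\{y_1,y_2\}$ is equitable, with quotient matrix
\[
Q=\begin{pmatrix} 0&0&t&2\\ 0&0&t&0\\ 1&s-1&0&0\\ 1&0&0&1\end{pmatrix}.
\]
Here $\lambda_1(B_n)$ is the largest root of $\det(xI-Q)$, since $Q$ is nonnegative and irreducible and its Perron root equals the spectral radius of the connected graph $B_n$.

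Rather than expand this determinant, I plan to set $\mu=\frac n2-\frac14$ and exhibit a positive vector $\mathbf z$ with $Q\mathbf z<\mu\mathbf z$ entrywise. The Collatz--Wielandt bound then gives $\lambda_1(B_n)<\mu$. Natural trial entries come from the eigen-equations: $z_2=t z_3/\mu$, $z_4=z_1/(\mu-1)$, $z_3=(z_1+(s-1)z_2)/\mu$, with $z_1$ slightly larger than $z_2$. Equivalently, I can eliminate variables to get the secular equation $1=\frac{t}{\lambda^2}\bigl(s-1+\frac{\lambda-1}{\lambda-1-2/\lambda}\bigr)$. The right side is decreasing in $\lambda$, so I only need to show it is $<1$ at $\lambda=\mu$. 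Concretely, I will check
\[
st+\frac{2t}{\mu-1-2/\mu}\cdot\frac1{\mu}\cdot\mu<\mu^2 .
\]
Put loosely, $st+O(1)<\frac{n^2}{4}-\frac n4$. Now $st\le \frac{(n-2)^2}{4}=\frac{n^2}{4}-n+1$, and the correction term is about $\frac{2t}{\mu}\approx 2$. This leaves a margin of about $\frac{3n}{4}$, which is ample for $n\ge 21$.

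The main obstacle is bookkeeping. I have to do the elimination correctly, namely $z_1(\lambda-2/(\lambda-1))=t z_3$, and then bound the correction term uniformly in both parity cases with $n\ge 21$. The monotonicity of the right-hand side in $\lambda$ for $\lambda>2$ is what justifies evaluating at $\mu$.
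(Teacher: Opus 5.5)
Your proposal is correct. For the lower bound it is essentially the paper's argument: you use $\lambda_1^2(T_2^*(n))>\lfloor n^2/4\rfloor-2$ from Lemma~\ref{Hab}, while the paper uses the Rayleigh-quotient bound $\frac{2(\lfloor n^2/4\rfloor-1)}{n}$ from the same lemma.

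The upper bound goes a different way. The paper never forms a quotient matrix. It takes a crude test vector on $B_n$ itself, equal to $1$ on the vertices of $K_{s,t}$ and $\frac18$ on the two extra triangle vertices. It checks $A(B_n)\mathbf z\le(\frac n2-\frac14)\mathbf z$ row by row; the triangle rows need $9<\frac n2-\frac14$, which is where the size condition on $n$ bites. It then pairs this with the positive Perron vector, so no elimination or monotonicity is needed. Your equitable quotient plus secular equation costs more algebra but is essentially exact. It shows that $\lambda_1(B_n)$ is the root in $(2,\infty)$ of $\lambda^2=st+\frac{2t}{(\lambda-2)(\lambda+1)}$. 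Hence $\lambda_1^2(B_n)$ exceeds $st\le\frac{(n-2)^2}{4}$ by only $O(1/n)$, and the target inequality holds with a margin of order $n$.

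Two small repairs are needed.
\begin{enumerate}
\item From your own secular equation, the correction term at $\mu$ is $\frac{2t}{\mu}\cdot\frac{1}{\mu-1-2/\mu}=\frac{2t}{(\mu-2)(\mu+1)}\approx\frac4n$. The stray factor $\mu$ in your display inflates it to about $2$. This only makes the inequality you verify stronger, and the stronger inequality still holds for $n\ge21$, so nothing breaks, but you should fix the display.
\item Comparing at $\lambda=\mu$ is legitimate only once you note that the Perron root lies in $(2,\infty)$, where the right-hand side is decreasing. This follows from $\lambda_1(B_n)\ge\lambda_1(K_{s,t})=\sqrt{st}>2$ together with $\mu>2$.
\end{enumerate}
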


\begin{proof}
	Let \(U\) and \(W\) be the two partite sets of the complete bipartite subgraph of \(B_n\), where
	$$
	|U|=\left\lfloor\frac{n-2}{2}\right\rfloor \quad\text{and}\quad |W|=\left\lceil\frac{n-2}{2}\right\rceil.
	$$
	Assume that \(u\in U\) is the vertex identified with a vertex of the triangle, and let \(v_1,v_2\) be the other two vertices of the triangle. Set
	$$
	L=\frac n2-\frac{1}{4}.
	$$
	Let \(\mathbf z=(z_v)_{v\in V(B_n)}\) be the positive vector defined by
	$$
	z_v= \begin{cases}
		1, & v\in U\cup W,\\[0.2cm]
		\dfrac{1}{8}, & v\in\{v_1,v_2\}.
	\end{cases}
	$$
	By direct calculation, we have
	$$
	\bigl(A(B_n)\mathbf z\bigr)_u =|W|+\frac14 =\left\lceil\frac{n-2}{2}\right\rceil+\frac14 \le \frac n2-\frac14 =Lz_u.
	$$
	For any \(v\in(U\cup W)\setminus\{u\}\), we have
	$$
	\bigl(A(B_n)\mathbf z\bigr)_v =d_{B_n}(v) \le\left\lceil\frac{n-2}{2}\right\rceil <Lz_v.
	$$
	For \(i=1,2\), we have
	$$
	\bigl(A(B_n)\mathbf z\bigr)_{v_i} =1+\frac18 =9z_{v_i} <Lz_{v_i},
	$$
	since \(n\ge21\). Hence,
	$$
	A(B_n)\mathbf z\le L\mathbf z,
	$$
	with strict inequality in at least one coordinate.
	
	Since \(B_n\) is connected, let \(\mathbf x\) be a positive unit eigenvector of \(A(B_n)\) corresponding to \(\lambda_1(B_n)\). Then
	$$
	\lambda_1(B_n)\mathbf x^{T}\mathbf z =\mathbf x^{T}A(B_n)\mathbf z <L\mathbf x^{T}\mathbf z.
	$$
	Note that \(\mathbf x^{T}\mathbf z>0\). Thus, we have
	$$
	\lambda_1(B_n)<L=\frac n2-\frac14.
	$$
	On the other hand, by Lemma \ref{Hab}, we have
	$$
	\lambda_1(T_2^*(n))\ge\frac{2\left(\left\lfloor \frac{n^2}{4} \right\rfloor-1\right)}{n} =
	\begin{cases}
		\displaystyle\frac n2-\frac2n, & n\text{ is even},\\[0.3cm]
		\displaystyle\frac n2-\frac{5}{2n}, & n\text{ is odd}.
	\end{cases}
	$$
	Recall that \(n\ge21\). Then $\lambda_1(T_2^*(n))>  \frac{n}{2}-\frac{1}{4}$ in either case. Therefore,
	$$
	\lambda_1(B_n)<\frac n2-\frac14<\lambda_1(T_2^*(n)).
	$$
	
	This completes the proof.\qed
	
\end{proof}

For integers $a, b \ge 2$,  let $F_{a,b}$ be the signed complete bipartite graph with exactly one negative edge.

\begin{lemma}\label{a+b=n}
	Let $a, b \ge 2$ be two integers, and $n=a+b$. Then
	\[
	\lambda_1(F_{a,b}) < \lambda_1(T_2^*(n)),
	\]
\end{lemma}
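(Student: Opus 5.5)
Let $U=\{u_1,\dots,u_a\}$ and $W=\{w_1,\dots,w_b\}$ be the parts of $F_{a,b}$, with $u_1w_1$ the unique negative edge. I would compute $\lambda_1(F_{a,b})$ through an equitable quotient matrix, in the same way as in Lemma~\ref{Hab}. Then I would compare it with the bound $\lambda_1^2(T_2^*(n))>\lfloor n^2/4\rfloor-2$ from Lemma~\ref{Hab}.

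First I take the partition $V_1=\{u_1\}$, $V_2=U\setminus\{u_1\}$, $V_3=\{w_1\}$, $V_4=W\setminus\{w_1\}$. This is equitable, with quotient matrix
\[
Q=\begin{pmatrix}
0&0&-1&b-1\\
0&0&1&b-1\\
-1&a-1&0&0\\
1&a-1&0&0
\end{pmatrix}.
\]
The other eigenvalues of $F_{a,b}$ come from vectors supported on $V_2$ or on $V_4$ with zero coordinate sum, and these give eigenvalue $0$. So $\lambda_1(F_{a,b})$ is the largest eigenvalue of $Q$. Since $F_{a,b}$ is bipartite, its spectrum is symmetric, and the characteristic polynomial of $Q$ is even. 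I write it as $t^4-pt^2+q$ and compute $p$ and $q$.

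The coefficient $p$ is the sum of the products $Q_{ij}Q_{ji}$ over the pairs $(1,3),(1,4),(2,3),(2,4)$:
\[
p=1+(b-1)+(a-1)+(a-1)(b-1)=ab .
\]
The constant term $q$ equals $\det Q=\det(B)\det(C)$ (up to sign), where $B=\begin{pmatrix}-1&b-1\\1&b-1\end{pmatrix}$ and $C=\begin{pmatrix}-1&a-1\\1&a-1\end{pmatrix}$. This gives $q=4(a-1)(b-1)$. Solving the quadratic in $t^2$,
\[
\lambda_1^2(F_{a,b})=\frac{ab+\sqrt{a^2b^2-16(a-1)(b-1)}}{2}.
\]
To bound this, I use $\sqrt{X^2-Y}\le X-\frac{Y}{2X}$ with $X=ab$ and $Y=16(a-1)(b-1)$. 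This yields
\[
\lambda_1^2(F_{a,b})\le ab-\frac{4(a-1)(b-1)}{ab}.
\]

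The remaining comparison is where care is needed. For $a,b\ge2$ we have $\frac{(a-1)(b-1)}{ab}\ge \frac14$, so $\lambda_1^2(F_{a,b})\le ab-1$. This is not enough against the bound $\lfloor n^2/4\rfloor-2$ from Lemma~\ref{Hab}, so I will split into cases.

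\emph{Case $a,b\ge3$.} Then $\frac{(a-1)(b-1)}{ab}\ge\frac49$, so $\frac{4(a-1)(b-1)}{ab}\ge\frac{16}{9}$. Hence $\lambda_1^2(F_{a,b})\le ab-\frac{16}{9}$, which is less than $2$ below $ab$ but does not directly beat $\lfloor n^2/4\rfloor-2$.

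Rather than the crude bound $\lambda_1^2(T_2^*(n))>\lfloor n^2/4\rfloor-2$, I will use the sharper identity $\lambda_1^2(T_2^*(n))=a_0b_0-\frac{2b_0}{\theta+1}$ from \eqref{a-b}, together with $\theta>b_0-1$ from \eqref{a-b-c-121}.

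\emph{Balanced case} ($\{a,b\}=\{a_0,b_0\}$). Here $\frac{2b_0}{\theta+1}<2$ is close to $2$, while $\frac{4(a-1)(b-1)}{ab}$ is close to $4$ for large $a,b$. I need
\[
\frac{4(a_0-1)(b_0-1)}{a_0b_0}>\frac{2b_0}{\theta+1}.
\]
This should follow from $\theta>b_0-1$ for $b_0\ge3$. The small values $n\le5$ I will check directly.

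\emph{Unbalanced case.} If $ab\le a_0b_0-1$, then $\lambda_1^2(F_{a,b})\le a_0b_0-1-\frac{16}{9}$ when $a,b\ge3$, or $\le a_0b_0-2$ when $\min(a,b)=2$. Both are below $a_0b_0-2<\lambda_1^2(T_2^*(n))$.

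\emph{Case $\min(a,b)=2$.} Take $a=2$. Then $\frac{4(b-1)}{b}\ge 2$ for $b\ge2$, so $\lambda_1^2(F_{2,b})\le 2b-2$. On the other side, $\lambda_1^2(T_2^*(n))>\lfloor n^2/4\rfloor-2\ge 2b-2$ whenever $n=b+2$. In the edge cases $n=4,5$, equality can occur in $\lfloor n^2/4\rfloor\ge 2b$; there I will either use strict inequalities or check directly.

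I expect the main obstacle to be exactly this balanced case and the handful of small-$n$ cases, where the margins are narrow and the bounds must be used carefully.
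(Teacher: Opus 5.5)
Your strategy is the paper's: the same equitable quotient matrix, the same bound $\lambda_1^2(F_{a,b})<ab-\frac{4(a-1)(b-1)}{ab}$, the same comparison with $\lambda_1^2(T_2^*(n))>\lfloor n^2/4\rfloor-2$ whenever $ab\le a_0b_0-1$, and hand checks for small $n$. However, the balanced-case step fails as written.

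You assert that $\frac{4(a_0-1)(b_0-1)}{a_0b_0}>\frac{2b_0}{\theta+1}$ follows from $\theta>b_0-1$ for every $b_0\ge 3$. But $\theta>b_0-1$ only yields $\frac{2b_0}{\theta+1}<2$. So the argument works exactly when $\frac{4(a_0-1)(b_0-1)}{a_0b_0}\ge 2$, i.e.\ $(a_0-2)(b_0-2)\ge 2$, i.e.\ $n\ge 7$; this is the analogue of the paper's condition $(a-2)(b-2)>2$. For $n=6$, $a=b=3$, the left-hand side is $\frac{16}{9}<2$. Hence $F_{3,3}$ is covered neither by this argument nor by your hand-checked cases $n\le 5$.

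The repair is easy. Either check it directly as the paper does, $\lambda_1^2(F_{3,3})=\frac{9+\sqrt{17}}{2}<7=\lfloor 36/4\rfloor-2$. Or use the sharper Rayleigh bound $\theta\ge \frac{2(a_0b_0-1)}{n}=\frac{8}{3}$ from the proof of Lemma~\ref{Hab}, which gives $\frac{6}{\theta+1}\le\frac{18}{11}<\frac{16}{9}$.

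There is also an arithmetic slip in your case $\min(a,b)=2$. For $a=2$ the quantity $\frac{4(a-1)(b-1)}{ab}$ equals $\frac{2(b-1)}{b}<2$, not $\frac{4(b-1)}{b}$, so $\lambda_1^2(F_{2,b})\le 2b-2$ does not follow from your inequality. The bound is nonetheless true, with equality: the discriminant $4b^2-16(b-1)=4(b-2)^2$ is a perfect square, so $\lambda_1^2(F_{2,b})=b+(b-2)=2b-2$. With this exact value, $2b-2\le\lfloor n^2/4\rfloor-2<\lambda_1^2(T_2^*(n))$ disposes of $n=4,5$ without a separate check. For $n\ge 6$ the split $\{2,n-2\}$ is not the balanced one, so your case $ab\le a_0b_0-1$ already handles it.
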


\begin{proof}
	Let $U = \{u_1,u_2,\dots,u_a\}$ and $W = \{w_1,w_2,\dots,w_b\}$ be the two partite sets of $F_{a,b}$, where $u_1w_1$ is the unique negative edge. Using the vertex partition
	\[
	V_1 = \{u_1\},\quad V_2 = U\setminus\{u_1\},\quad V_3 = \{w_1\},\quad V_4 = W\setminus\{w_1\},
	\]
	we obtain the equitable quotient matrix
	\[
	Q(F_{a,b}) =
	\begin{pmatrix}
		0 & 0 & -1 & b-1 \\
		0 & 0 & 1 & b-1 \\
		-1 & a-1 & 0 & 0 \\
		1 & a-1 & 0 & 0
	\end{pmatrix}.
	\]
	By direct calculation, the characteristic polynomial of $Q(F_{a,b})$ is
	\[
	g_{a,b}(t) = t^4 - abt^2 + 4(a-1)(b-1).
	\]
	Thus the eigenvalues of $F_{a,b}$ consist of the four roots of $g_{a,b}(t)$,  together with $0$ (multiplicity $n-4$). Indeed, eigenvalue $0$ corresponds to vectors orthogonal to the constant subspace.  Hence,
	\[
	\lambda_1(F_{a,b}) = \lambda_{1}(Q(F_{a,b})).
	\]
	Then $s= \lambda_1(F_{a,b})^2$ is the larger root of
	\begin{equation}\label{Fab}
		h_{a,b}(x) = x^2 - abx + 4(a-1)(b-1),
	\end{equation}
	that is,
	$$
	s=\frac{ab}{2}+ \sqrt{\frac{a^2 b^2}{4}-4(a-1)(b-1)}.
	$$
	Clearly, $0 < s < ab$.

	\noindent\emph{Case~1.}~$n \ge 8$. If $ab \le \left\lfloor \frac{n^2}{4} \right\rfloor - 2$, by Lemma \ref{Hab}, we have
	\[
	\lambda_1(F_{a,b})^2=s < ab \le \left\lfloor \frac{n^2}{4} \right\rfloor- 2 < \lambda_1^2(T_2^*(n)),
	\]
	which implies that $\lambda_1(F_{a,b}) < \lambda_1(T_2^*(n))$.
	
	If $ab \ge \left\lfloor \frac{n^2}{4} \right\rfloor - 1$,  we have
	$$
	(a-b)^2 = n^2 - 4ab\le n^2- 4\left\lfloor \frac{n^2}{4} \right\rfloor +4,
	$$
	since $a+b=n$. Then $|a-b| \le 2$. Note that $a+b=n \ge 8$. Then
	\[
	(a-2)(b-2) > 2,
	\]
	which implies that
	\[
	4(a-1)(b-1) - 2ab = 2\big((a-2)(b-2)-2\big) > 0.
	\]
	By Lemma \ref{Hab}, \eqref{Fab} and $s < ab$, we have
	\begin{align*}
		s &= ab - \frac{4(a-1)(b-1)}{s} \\
		&< ab - \frac{4(a-1)(b-1)}{ab} \\
		&< ab - 2  \\
		&\le \left\lfloor \frac{n^2}{4} \right\rfloor - 2\\
		&< \lambda_1^2(T_2^*(n)),
	\end{align*}
	which implies that $\lambda_1(F_{a,b}) < \lambda_1(T_2^*(n))$.

	\noindent\emph{Case~2.}~$4 \le n \le 7$. Since $F_{a,b}\cong F_{b,a}$, without loss of generality, assume that $a \le b$.
	
	If $n=4$, then $ a=b=2$. By \eqref{Fab} and Lemma \ref{Hab}, we have $s=2 < \lambda_1^2(T_2^*(4))$.
	
	If $n=5$, then $ a=2$ and $b=3$. By \eqref{Fab} and Lemma \ref{Hab}, we have $s =4 < \lambda_1^2(T_2^*(5))$.
	
	If $n=6$, then $a=2$ and $b=4$, or $a=b=3$. By \eqref{Fab}, $s=6$ or $\frac{9+\sqrt{17}}{2}$. By Lemma \ref{Hab}, we have
	\[
	s  < 7 = \left\lfloor \frac{6^2}{4} \right\rfloor - 2 < \lambda_1^2(T_2^*(6))
	\]
	in either case.
	
	If $n=7$, then $a=2$ and $b=5$, or $a=3$ and $b=4$. By \eqref{Fab}, $s=8$ or $6+2\sqrt{3}$. By Lemma \ref{Hab}, we have
	\[
	s < 10 = \left\lfloor \frac{7^2}{4} \right\rfloor- 2 < \lambda_1^2(T_2^*(7))
	\]
	in either case.
	
	Thus $\lambda_1(F_{a,b}) < \lambda_1(T_2^*(n))$ in each case.
	
	This completes the proof.\qed
	
\end{proof}


A \emph{signed subgraph} of a signed graph is a subgraph whose edge
signs are inherited from the original signed graph. Let $\Gamma_+$ (respectively, $\Gamma_-$) denote the spanning
signed subgraph of a signed graph $\Gamma$ consisting of all its positive
(respectively, negative) edges. Thus,
$V(\Gamma_+)=V(\Gamma_-)=V(\Gamma)$, with isolated vertices retained
in both subgraphs.

\begin{lemma}\label{Gamma+}
	Let $n \ge 4$ be an integer, and let $\Gamma$ be an  unbalanced signed graph of order $n$ with a nonnegative unit eigenvector $\mathbf{x}$ for its index. If $\Gamma_{+}$ is bipartite, then
	\[
	\lambda_1(\Gamma) \le \lambda_1(T_2^*(n)),
	\]
	with equality holding if and only if $\Gamma \sim T_2^*(n)$.
\end{lemma}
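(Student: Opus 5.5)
Since $\mathbf{x}\ge 0$, I would start from
\[
\lambda_1(\Gamma)=\mathbf{x}^TA(\Gamma)\mathbf{x}=2\sum_{uv\in E(\Gamma_+)}x_ux_v-2\sum_{uv\in E(\Gamma_-)}x_ux_v .
\]
Fix a bipartition $(X,Y)$ of $\Gamma_+$, so that every positive edge goes between $X$ and $Y$. Because $\Gamma$ is unbalanced, $\Gamma_-$ is nonempty. Every negative edge $uv$ is then of one of two kinds: either it lies inside $X$ or inside $Y$ (type I), or it joins $X$ to $Y$ (type II). Replacing $\Gamma$ by a signed graph on the same vertex set can only increase the Rayleigh quotient at $\mathbf{x}$, and hence $\lambda_1$, in the following ways:
\begin{itemize}
\item adding all missing positive $X$--$Y$ edges;
\item deleting all but one negative edge;
\item deleting a negative edge, since $-2x_ux_v\le 0$.
\end{itemize}
We must keep at least one negative edge so that the result stays unbalanced. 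Type II negative edges occupy $X$--$Y$ pairs. Both resulting graphs are unbalanced, since a negative triangle or a negative 4-cycle is present.

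The comparison then splits into two cases:
\begin{enumerate}
\item If some negative edge is of type I, I would delete every other negative edge and make all $X$--$Y$ pairs positive. For any $u,v$ this raises $\mathbf{x}^TA\mathbf{x}$, because a type II pair goes from $-2x_ux_v$ to $+2x_ux_v$. This yields $\lambda_1(\Gamma)\le\mathbf{x}^TA(H_{a,b})\mathbf{x}\le\lambda_1(H_{a,b})$, where $a=|X|$ (or $|Y|$) is at least $2$ and $b\ge 1$.
\item If all negative edges are of type II, I would keep one of them, $u_1w_1$, and make every other $X$--$Y$ pair positive. This gives $\lambda_1(\Gamma)\le\lambda_1(F_{a,b})$. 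If $a=1$ or $b=1$, then no negative cycle can exist: an unbalanced graph needs a cycle through the negative edge, and such a cycle requires $|X|,|Y|\ge2$. Hence $a,b\ge2$.
\end{enumerate}

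Lemma~\ref{Hab} and Lemma~\ref{a+b=n} then give $\lambda_1(\Gamma)\le\lambda_1(T_2^*(n))$, with strict inequality in the second case. One should also check that $X$ and $Y$ can be taken with $a+b=n$, where isolated vertices of $\Gamma_+$ are placed arbitrarily, and that $H_{a,b}$ with $b=0$ cannot occur. The latter holds because $b=0$ would mean there are no positive edges, and then $\lambda_1$ is small; alternatively, move one vertex across. Since $n\ge4$, small configurations are also covered.

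For equality, I need $\lambda_1(\Gamma)=\lambda_1(T_2^*(n))$. This forces the first case, and Lemma~\ref{Hab} forces $a=\lceil n/2\rceil$. It also forces $\mathbf{x}$ to be an eigenvector of $H_{a,b}$ for its index. By Lemma~\ref{Hab} that eigenvalue is simple with a positive eigenvector, so $\mathbf{x}>0$. With $\mathbf{x}>0$, every modification above changes the quotient strictly: each added positive edge, each deleted negative edge, and each flipped type II edge. Hence no modification occurred, and $\Gamma$ equals $H_{\lceil n/2\rceil,\lfloor n/2\rfloor}=T_2^*(n)$ up to the initial switching, which gives $\Gamma\sim T_2^*(n)$. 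Conversely, equality holds trivially.

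The main obstacle I expect is the equality bookkeeping. I must ensure that the chain $\mathbf{x}^TA(\Gamma)\mathbf{x}\le\mathbf{x}^TA(H)\mathbf{x}\le\lambda_1(H)$ being tight forces $\mathbf{x}$ to be the Perron vector of $H$, and hence positive, before I conclude that the edge sets coincide. I must also handle the degenerate bipartitions, such as an empty side or isolated vertices, carefully.
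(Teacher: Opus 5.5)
Your proposal is correct and follows essentially the same route as the paper: you split according to whether some negative edge lies inside a part of the bipartition of $\Gamma_+$, dominate $A(\Gamma)$ entrywise by $A(H_{a,b})$ or $A(F_{a,b})$, apply Lemmas~\ref{Hab} and~\ref{a+b=n}, and get the equality case from the positivity of the Perron vector of $H_{a,b}$. The one vague step is the empty-side case ("then $\lambda_1$ is small"), and the paper makes it precise: if $\Gamma_+$ has no edges, then $\lambda_1(\Gamma)=\mathbf{x}^TA(\Gamma)\mathbf{x}\le 0$, which is impossible (and is in any case strictly below $\lambda_1(T_2^*(n))$); also, no switching occurs in this lemma, so the equality argument gives $\Gamma=T_2^*(n)$ directly.
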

\begin{proof}
	Since $\Gamma$ is unbalanced, $\Gamma$ contains at least one negative edge.	Moreover, $\Gamma_{+}$ contains at least one edge; otherwise $\mathbf{x}^\top A(\Gamma)\mathbf{x} \le 0$, a contradiction.
	
	Let \((U,W)\) be a bipartition of the vertex set of $\Gamma_{+}$. Suppose that a negative edge $e$ lies within one part, say $U$. Using the bipartition \((U,W)\), we construct a new signed graph $\Sigma:= H_{|U|,|W|}$. Clearly, $A(\Sigma)-A(\Gamma)$ is entrywise nonnegative. Thus, by Lemma \ref{Hab} and the Rayleigh-Ritz principle, we have
	\[
	\lambda_1(\Gamma) = \mathbf{x}^T A(\Gamma)\mathbf{x} \le \mathbf{x}^T A(\Sigma)\mathbf{x} \le \lambda_1(\Sigma) \le \lambda_1(T_2^*(n)).
	\]
	If equality holds, Lemma \ref{Hab} and the Rayleigh-Ritz principle imply the following two facts: $\mathrm{(i)}~\Sigma=T_2^*(n)$; $\mathrm{(ii)}~\mathbf{x}$ is also an eigenvector of $A(\Sigma)$ corresponding to $\lambda_1(\Sigma)$. Hence, all coordinates of $\mathbf{x}$ are positive. Then, we have $A(\Sigma)-A(\Gamma)=\mathbf{0}$. Hence, $\Gamma=\Sigma= T_2^*(n)$.
	
	If no edges lie inside $U$ or $W$, then the underlying graph of $\Gamma$ is bipartite. If one part contained only one vertex, $\Gamma$ would be a subgraph of a star and hence balanced, a contradiction. Therefore both parts have at least two vertices. Using the bipartition \((U,W)\), we construct a new signed graph $\Sigma_1 := F_{|U|,|W|}$. Clearly, $A(\Sigma_1)-A(\Gamma)$ is entrywise nonnegative. By Lemma \ref{a+b=n} and the Rayleigh-Ritz principle, we have
	$$
	\lambda_1(\Gamma)  = \mathbf{x}^T A(\Gamma)\mathbf{x} \le \mathbf{x}^T A(\Sigma_1)\mathbf{x} \le \lambda_1(\Sigma_1) < \lambda_1(T_2^*(n)).
	$$
	
	This completes the proof.\qed
\end{proof}

\begin{Tproof}\textbf{~of~Theorem~\ref{C2k+1-free}.}~~Let $\Gamma=(G,\sigma)$ be a signed graph having the maximum index over all $\mathcal{C}^{+}_{2k+1}$-free unbalanced signed graphs of order $n$. By Lemma \ref{Sun-Liu-Lan-LAA-2022}, let $\Gamma^{\prime}=(G,\sigma^{\prime})$ be the signed graph switching equivalent to $\Gamma$ for which $\lambda_{1}(\Gamma^{\prime})$ admits a nonnegative eigenvector. Let $\mathbf{x}$ be a nonnegative unit eigenvector corresponding to $\lambda_{1}(\Gamma^{\prime})$. By Lemmas \ref{Zaslavsky-DAM-1982} and \ref{Hou-Tang-Wang-AMC-2019}, $\Gamma^{\prime}=(G,\sigma^{\prime})$ is also a $\mathcal{C}^{+}_{2k+1}$-free unbalanced signed graph with the maximum index. Clearly, $\Gamma^{\prime}_+$ is $C_{2k+1}$-free.
	
	If $\Gamma^{\prime}_+$ is non-bipartite, by Lemmas \ref{Zou-Feng-Li-2026} and \ref{Bn}, we have
	\[
	\lambda_1(\Gamma) = \lambda_1(\Gamma^{\prime})=\mathbf{x}^{T}A(\Gamma^{\prime})\mathbf{x}\le \mathbf{x}^{T}A(\Gamma^{\prime}_+)\mathbf{x} \le \lambda_1(\Gamma^{\prime}_+) \le \lambda_1(B_n) < \lambda_1(T_2^*(n)).
	\]
	
	If $\Gamma^{\prime}_+$ is bipartite, by Lemma \ref{Gamma+}, we obtain
	\[
	\lambda_1(\Gamma) \le \lambda_1(T_2^*(n)),
	\]
	with equality holding if and only if $\Gamma \sim T_2^*(n)$.
	
	This completes the proof.\qed
\end{Tproof}

\end{document}